\documentclass[11pt]{amsart}
\usepackage[T1]{fontenc}
\usepackage{lmodern}
\usepackage[margin=1.1in]{geometry}
\usepackage{amsmath,amssymb,mathtools}
\usepackage[expansion=false]{microtype}

\usepackage{xcolor} 

\usepackage[hidelinks]{hyperref}

\usepackage{dsfont}   

\newcommand{\R}{\mathbb{R}}
\newcommand{\Q}{\mathbb{Q}}
\newcommand{\Z}{\mathbb{Z}}
\newcommand{\N}{\mathbb{N}}
\newcommand{\T}{\mathbb{T}}
\newcommand{\one}{\mathds{1}}

\DeclareMathOperator{\ran}{ran}

\newtheorem{theorem}{Theorem}
\newtheorem{proposition}[theorem]{Proposition}
\newtheorem{lemma}[theorem]{Lemma}
\newtheorem{corollary}[theorem]{Corollary}
\theoremstyle{definition}

\newtheorem{remark}[theorem]{Remark}

\title{Countable unions of intervals without Riesz spectra}

\author[D. G. Lee]{Dae Gwan Lee}
\address{Department of Applied Mathematics, Pukyong National University, Busan 48513, Republic of Korea}
\email{daegwan@pknu.ac.kr, daegwans@gmail.com}
\date{\today}
\subjclass[2020]{42C15}

\keywords{Riesz spectra, exponential systems, Riesz bases, countable unions of intervals}

\begin{document}

\begin{abstract}
We prove that every set $S = \bigcup_{n \geq 0} [ t_{2 n + 1} , t_{2 n} )$, where $t_0 > t_1 > \cdots > 0$ and $t_n \to 0$, admits no Riesz spectrum. 
This extends the result of Kozma, Nitzan, and Olevskii \cite{KNO} to a wide class of countable unions of intervals. 
\end{abstract}


\maketitle



\section{Introduction and main result}
\label{sec:intro}

For a countable set $\Lambda \subset \R$ and a finite positive measure set $S \subset \R$, 
we write $E ( \Lambda; S ) = \{ e^{2 \pi i \lambda ( \cdot )} |_S : \lambda \in \Lambda \}$ for the corresponding system of complex exponentials in $L^2 ( S )$. 
We call $\Lambda$ its (\emph{frequency}) \emph{spectrum} and $S$ its (\emph{spatial}) \emph{domain}. 
We call $\Lambda$ a \emph{Riesz spectrum} for $S$ if $E ( \Lambda; S )$ is a Riesz basis. 
When the domain is clear, we write $E ( \Lambda )$ in place of $E ( \Lambda; S )$. 
We refer to \cite{Ch16} for general background on frames and Riesz bases.

Seip \cite{Seip95} constructed Riesz spectra for unions of two bounded intervals and for certain unions of more intervals.
Kozma and Nitzan \cite{KN} proved that every finite union of bounded intervals admits a Riesz spectrum.


Caragea and Lee \cite{CL22} proved that if $I_1, \ldots, I_N$, with $I_\ell=[a_\ell,b_\ell]$, are disjoint intervals contained in $[0,1]$ such that $1, a_1, \ldots, a_N, b_1, \ldots, b_N$ are linearly independent over $\Q$, 
then there exist pairwise disjoint subsets $\Lambda_1, \ldots, \Lambda_N$ of $\Z$ such that $E( \cup_{n \in J} \, \Lambda_n ;  \cup_{n \in J} \, I_n )$ is a Riesz basis for every $J \subset \{ 1, \ldots , N \}$.
For finite partitions of $[ 0 , 1 )$ into intervals $I_1, \ldots, I_N$, 
Pfander, Revay and Walnut \cite{PRW24} showed that there exists a partition of $\Z$ into sets $\Lambda_1, \ldots, \Lambda_N$ such that $E( \cup_{n \in J} \, \Lambda_n ; \cup_{n \in J} \, I_n )$ is a Riesz basis whenever $J \subset \{ 1, \ldots , N \}$ is a consecutive set.  
Lee, Pfander, and Walnut \cite{LPW23} studied Riesz bases of exponentials restricted to prescribed, possibly overlapping subsets of the ambient domain.
In \cite{Lee24}, the author derived some sufficient conditions for combining two exponential Riesz bases into a basis for the union of their domains.

Kozma, Nitzan, and Olevskii \cite{KNO} constructed a bounded countable union of intervals without a Riesz spectrum.
Their example is obtained by an inductive construction based on repeated subdivisions into rapidly increasing numbers of pieces, where the interval endpoints have a single accumulation point.
Recently, the nonexistence of Riesz spectra for balls and triangles was proved independently by Ortega-Cerd\`a \cite{OC} and Wan \cite{Wan}.



In this paper, we generalize the result of Kozma, Nitzan, and Olevskii \cite{KNO} to countable unions of separated intervals with a single accumulation point, using a method inspired by Ortega-Cerd\`a \cite{OC}. 

\begin{theorem}\label{thm:main}
Let
\begin{equation}\label{eq:set-S}
    S = \bigcup_{n = 0}^{\infty} [ t_{2 n + 1} , t_{2 n} ) , 
\end{equation}
where $t_0 > t_1 > \cdots > 0$ with $t_n \to 0$.
Then $S$ admits no Riesz spectrum.
\end{theorem}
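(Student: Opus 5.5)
The plan is to follow the ``plunge region'' strategy of Ortega-Cerd\`a \cite{OC}. Compare two things: the number of eigenvalues of the time--frequency concentration operator that are neither close to $0$ nor close to $1$, and an \emph{upper} bound on that number forced by the existence of a Riesz spectrum. For $R>0$ let $P_S$ be multiplication by $\one_S$ and $Q_R$ the Fourier projection onto $[-R,R]$. Put $T_R = Q_R P_S Q_R$, a trace-class operator with $0 \le T_R \le 1$. A direct computation gives
\begin{equation*}
\operatorname{tr}(T_R) - \operatorname{tr}(T_R^2) = \int_{-R}^{R}\int_{|y|>R} |\widehat{\one_S}(x-y)|^2 \, dy\, dx \;\asymp\; \int_{\R} \min(|t|,2R)\, |\widehat{\one_S}(t)|^2 \, dt ,
\end{equation*}
and this quantity controls from below the number of eigenvalues of $T_R$ in $(\varepsilon,1-\varepsilon)$, up to constants depending on $\varepsilon$.

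\textbf{Step 1 (geometric lower bound).} I will show that the right-hand side grows faster than $\log R$. For a single jump of $\one_S$ one has $|\widehat{\one_S}(t)|^2 \approx c/t^2$ on a range of frequencies, which contributes $c\log R$. For $S$ as in \eqref{eq:set-S}, fix $N$ and look at the first $2N$ endpoints $t_0,\dots,t_{2N-1}$. They are separated by some $\delta_N>0$. Localize with a smooth cutoff that isolates a neighbourhood of these endpoints away from $0$, where the remaining endpoints accumulate. Then each of these $2N$ jumps contributes $\gtrsim \log R$ for $R \gg 1/\delta_N$. Hence
\begin{equation*}
\liminf_{R\to\infty} \frac{1}{\log R}\bigl(\operatorname{tr}T_R - \operatorname{tr}T_R^2\bigr) \ge c N
\end{equation*}
for every $N$, so the ratio tends to $+\infty$. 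The only technical point is to keep the cross terms between distant jumps and the localization errors at size $O(1)$; this is a routine oscillatory-integral estimate.

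\textbf{Step 2 (upper bound from a Riesz spectrum).} Suppose $\Lambda$ is a Riesz spectrum for $S$. Then the functions $k_\lambda = \widehat{\one_S e_\lambda}$ form a Riesz basis of the Paley--Wiener type space $PW_S$, with biorthogonal basis $\{g_\lambda\}$. I want to show that the number of eigenvalues of $T_R$ in $(\varepsilon,1-\varepsilon)$ is at most $C\log R$. The comparison goes as follows.
\begin{itemize}
\item[(a)] Test $T_R$ against the span $V_{\mathrm{in}}$ of $\{k_\lambda : \lambda \in [-R+L, R-L]\}$ and the span $V_{\mathrm{out}}$ of $\{k_\lambda: |\lambda|>R+L\}$.
\item[(b)] Estimate the ``leakage'' of $k_\lambda$ and $g_\lambda$ across the boundary $\pm R$ with weights decaying like $1/\operatorname{dist}(\lambda,\pm R)$, and sum over dyadic shells around $\pm R$.
\item[(c)] By a min--max argument, all but $O(\sum_{\text{shells}} 1) = O(\log R)$ eigenvalues are then pushed within $\varepsilon$ of $0$ or $1$.
\end{itemize}
Uniform discreteness of $\Lambda$ and the Landau density $|S|$ give the counting. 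The leakage estimate needs $|\widehat{\one_S}(t)| \lesssim 1/|t|$, which holds because $\one_S$ has bounded variation only locally. The tail $\sum_{n>N}$ of jumps near $0$ must be handled separately, for instance by noting that $\sum_n |t_n - t_{n+1}| < \infty$ gives summable total contributions on each dyadic shell.

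\textbf{Conclusion and main obstacle.} Steps 1 and 2 contradict each other, since the plunge count is both $\gg \log R$ and $O(\log R)$, so no Riesz spectrum exists. I expect Step 2 to be the main obstacle. The naive leakage bound for general $\Lambda$ only gives the weaker estimate $O(\sum_\lambda \operatorname{dist}(\lambda,\pm R)^{-1})$, and I must check that this is genuinely $O(\log R)$ with a constant independent of the fine structure of $S$. In particular, the constant must not grow with the number of intervals, because Step 1 makes exactly that number large. If the Riesz-basis leakage bound turns out to depend on $S$ only through its measure and the Riesz bounds, the argument closes. Otherwise I would try to replace Step 1 by a finer comparison at a single scale $R \approx 1/\delta_N$, where the $2N$ jumps are resolved but the bound from Step 2 still costs only $O(\log R)$.
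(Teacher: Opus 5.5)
The gap is Step~2, and it is more than a missing estimate. In the form you say would close the argument, it is false. That form is: at most $C\log R$ eigenvalues of $T_R$ in $(\varepsilon,1-\varepsilon)$, with $C$ depending on $S$ only through $|S|$ and the Riesz bounds.

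The $N$-interval set $S_N=\bigcup_{k=0}^{N-1}[\tfrac kN,\tfrac kN+\tfrac1{2N})$ is a counterexample. It has the orthogonal basis $E(\Lambda_N)$ with $\Lambda_N=\{m\in\Z:(m \bmod 2N)<N\}$. To see this, write $x=\tfrac kN+u$ and $m=2Nj+\ell$ with $0\le\ell<N$; then $e^{2\pi i m x}=e^{2\pi i\ell k/N}e^{2\pi i m u}$. Under $L^2(S_N)\cong\mathbb{C}^N\otimes L^2[0,\tfrac1{2N})$ the system becomes $\{v_\ell\otimes e^{2\pi i(2Nj+\ell)u}\}$ with $v_\ell=(e^{2\pi i\ell k/N})_k$, which is an orthogonal basis. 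So its Riesz bounds are $A=B=\tfrac12$ for every $N$. Yet, by the Landau--Widom asymptotics, $T_R$ has $\asymp_\varepsilon N\log R$ eigenvalues in $(\varepsilon,1-\varepsilon)$ as $R\to\infty$.

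More fundamentally, nothing in your Steps~1--2 uses that the endpoints of $S$ accumulate at a \emph{single} point. Both steps apply equally to $\Omega=S_*\cup\big(([0,1)\setminus S_*)+1\big)$ from Remark~\ref{rem:essential2}. Its endpoints accumulate from the right at $0$ and at $1$ and are locally finite elsewhere, yet $E(\Z)$ is an orthonormal basis of $L^2(\Omega)$. So at least one of your steps must fail for $\Omega$. No refinement of jump or plunge counting, including your single-scale fallback, can separate $S$ from $\Omega$.

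Two auxiliary claims are also wrong as stated:
\begin{enumerate}
\item The identity $\operatorname{tr}T_R-\operatorname{tr}T_R^2=\sum\lambda(1-\lambda)\ge\varepsilon(1-\varepsilon)\,\#\{\lambda\in(\varepsilon,1-\varepsilon)\}$ bounds the plunge count from \emph{above}, not from below.
\item The decay $|\widehat{\one_S}(t)|\lesssim1/|t|$ fails once there are infinitely many jumps. For $S_*$ at $t=2^k/3$ with $k$ even, about $k/2$ interval terms add in phase, so $|\widehat{\one_{S_*}}(t)|$ is of order $\log t/t$.
\end{enumerate}

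The missing idea is to use the one-sided oscillation at one point qualitatively, instead of counting jumps. The paper converts an integer Riesz spectrum into the stationary representation of Lemma~\ref{lem:stationary}. This gives projections $P_t=\int_\T s(t-\theta)\,d\mathsf{E}(\theta)$ with $\|P_t-Q\|\le\rho<1$ for almost every $t$, and $\mathsf{E}(\{0\})\neq0$. Approach the accumulation point from the right, once through $S$ and once through its complement. The two limiting symbols agree at every $\theta\neq0$, because right-hand limits exist everywhere else, and they differ only at $\theta=0$.

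Hence the limiting projections satisfy $P^{(1)}=P^{(0)}+\mathsf{E}(\{0\})$ and have strictly nested ranges. Both lie within $\rho$ of $Q$, contradicting Lemma~\ref{lem:projections}. Lemma~\ref{lem:rounding} and a dilation then reduce real spectra to integer ones. For the circle version of $\Omega$, the two limits also differ at the nonzero atom $\tfrac12$, so no nesting occurs. That is exactly the distinction your approach cannot detect.
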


We point out that the endpoints of $S$ in \eqref{eq:set-S} may decrease to $0$ at any rate, and no comparison between the lengths of the intervals and gaps is needed.
Throughout the paper, we use half-open intervals but changing their endpoints does not affect the corresponding $L^2$ spaces.



Note that the characteristic function $\mathds{1}_S$ of $S$ 
has a right hand limit at every point except $0$, where both values $0$ and $1$ occur on sets of positive measure in every right neighborhood.
We will show that this property prevents the existence of a Riesz spectrum (see Proposition~\ref{prop:oscillation} for integer frequencies and Corollary~\ref{cor:real} for real frequencies).

Our proof relies on a discrete form of the stationary projection method of Ortega-Cerd\`a \cite{OC}.
We first consider the case of integer frequencies. 
For an integer Riesz spectrum, the map of coefficients to the corresponding Fourier series on $S$ is a bounded isomorphism (see \eqref{eq:RieszBasisSynthesisOperator} below).
Lemma~\ref{lem:projections} expresses this property as an operator norm distance strictly smaller than $1$ between two orthogonal projections.
Taking limits of integer translates of the spectrum and averaging over these translates produces a new Hilbert space on which this bound persists and the shift has a nonzero fixed vector.
The one-sided oscillation gives two limiting projections with strictly nested ranges, contradicting the same projection lemma.
Finally, stability under frequency perturbations reduces arbitrary real frequencies to the integer case.


Section~\ref{sec:preliminaries} contains the preliminary results on complementation, orthogonal projections, and perturbations of frequencies.
In Section~\ref{sec:proof}, we construct a stationary representation and prove a one-sided oscillation criterion for integer frequencies. We then extend the criterion to real frequencies and prove Theorem~\ref{thm:main}.
Section~\ref{sec:conclusion} summarizes the main result and the scope of the criterion.

\section{Preliminaries}
\label{sec:preliminaries}

For a countable set $\Lambda \subset \R$ and a finite positive measure set $S \subset \R$, 
the system $E ( \Lambda; S )$ is called a \emph{Riesz basis} if it is complete in $L^2 ( S )$ and there exist constants $0 < A \leq B < \infty$ such that
\begin{equation}\label{eq:Riesz}
    A \sum_{\lambda \in \Lambda} | c_\lambda |^2 \leq \int_S \Big| \sum_{\lambda \in \Lambda} c_\lambda e^{2 \pi i \lambda x} \Big|^2 \, dx \leq B \sum_{\lambda \in \Lambda} | c_\lambda |^2
\end{equation}
for every finitely supported sequence $\{ c_\lambda \}_{\lambda \in \Lambda}$ (see \cite{Ch16}).
Equivalently, the synthesis operator 
\begin{equation}
\label{eq:RieszBasisSynthesisOperator}
T: \; \{ c_\lambda \}_{\lambda \in \Lambda}  \; \longmapsto \;   \sum_{\lambda \in \Lambda} c_\lambda \, e^{2 \pi i \lambda ( \cdot ) }
\end{equation}
defined initially for finitely supported sequences,
extends to a bounded isomorphism from $\ell_2 ( \Lambda )$ onto $L^2 ( S )$. 
Here, the upper inequality of \eqref{eq:Riesz} gives the bounded extension of the synthesis operator $T$, the lower inequality makes $T$ injective with closed range, and the completeness of $E ( \Lambda; S )$ makes the range of $T$ dense in $L^2 ( S )$, and hence equal to $L^2 ( S )$.

The Riesz inequality \eqref{eq:Riesz} implles that $\Lambda$ is separated, i.e., $\inf\{ |\lambda_1 - \lambda_2 | : \lambda_1 \neq \lambda_2 \in \Lambda \} > 0$ (see e.g., \cite[Proposition~3]{Lee24-Axioms}). If $\Lambda$ is separated and $S$ is bounded, then the upper inequality of \eqref{eq:Riesz} holds, i.e., $E ( \Lambda; S )$ is a Bessel sequence (see \cite[p.\,135, Theorem 4]{Yo01}).

For brevity, a spectrum contained in $\mathbb{Z}$ is called an \emph{integer spectrum}. 
By the bounds of a Riesz spectrum, we mean the Riesz bounds of its associated exponential system.

The following complementation principle follows from \cite[Proposition~2.1]{MM09} and \cite[Proposition~5.4]{BCMS16}; 
see also \cite[Proposition~3]{CL22} and \cite[Theorem~12]{PRW24}.

\begin{lemma}\label{lem:complement}
Let $S \subset [ 0 , 1 )$ be a measurable set with $0 < | S | < 1$, and let $\Gamma \subset \Z$.
Then $\Gamma$ is a Riesz spectrum for $S$ if and only if $\Z \backslash \Gamma$ is a Riesz spectrum for $[ 0 , 1 ) \backslash S$.
In this case, the optimal lower Riesz bounds of $\Gamma$ and $\Z \backslash \Gamma$ are identical. 
\end{lemma}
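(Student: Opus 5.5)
The idea is to rewrite everything in terms of two orthogonal projections on the single Hilbert space $L^2[0,1)$. On that space $\{e_n\}_{n\in\Z}$, with $e_n(x)=e^{2\pi i n x}$, is an orthonormal basis. Let $P$ be multiplication by $\one_S$, which is the orthogonal projection onto $L^2(S)$ (extended by zero). Let $Q$ be the orthogonal projection onto $H_\Gamma:=\overline{\operatorname{span}}\{e_n:n\in\Gamma\}$. Then $I-P$ projects onto $L^2([0,1)\backslash S)$ and $I-Q$ projects onto $H_{\Z\backslash\Gamma}$.

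By Parseval, for a finitely supported sequence $c$ we have $\int_S|\sum_{n\in\Gamma}c_ne_n|^2=\|Pf\|^2$ with $f=\sum c_ne_n\in H_\Gamma$ and $\|f\|^2=\sum|c_n|^2$. The upper Riesz bound $1$ is therefore automatic. Hence $\Gamma$ is a Riesz spectrum for $S$ exactly when the operator $T:=P|_{H_\Gamma}:H_\Gamma\to \ran P$ is invertible. Its optimal lower Riesz bound is
\[
A_\Gamma=\inf_{f\in\ran Q,\ \|f\|=1}\|Pf\|^2=1-\|(I-P)Q\|^2 ,
\]
using $\|Pf\|^2+\|(I-P)f\|^2=\|f\|^2$. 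The same reasoning with $P,Q$ replaced by $I-P,I-Q$ gives the following. The set $\Z\backslash\Gamma$ is a Riesz spectrum for $[0,1)\backslash S$ iff $T':=(I-P)|_{\ran(I-Q)}:\ran(I-Q)\to\ran(I-P)$ is invertible, and then $A_{\Z\backslash\Gamma}=1-\|P(I-Q)\|^2$.

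Next I characterize invertibility by norms. $T$ is bounded below iff $\|(I-P)Q\|<1$. The range of $T$ is all of $\ran P$ iff the adjoint $T^*=Q|_{\ran P}$ is bounded below. That holds iff $\|(I-Q)P\|<1$. So $T$ is invertible iff both $\|(I-P)Q\|<1$ and $\|(I-Q)P\|<1$. Likewise $T'$ is invertible iff $\|P(I-Q)\|<1$ and $\|Q(I-P)\|<1$. Taking adjoints, $\|Q(I-P)\|=\|(I-P)Q\|$ and $\|P(I-Q)\|=\|(I-Q)P\|$. So the two pairs of conditions coincide, which proves the equivalence.

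For the equality of lower bounds I need $\|(I-P)Q\|=\|(I-Q)P\|$ whenever both are $<1$. I would use the classical two-projection fact (Kato, Akhiezer--Glazman):
\[
\|P-Q\|=\max\{\|(I-P)Q\|,\|(I-Q)P\|\},
\]
and if $\|P-Q\|<1$ both terms are equal. A self-contained route is also available. When $T$ is invertible, the polar decomposition of $T$ gives a unitary $U:\ran Q\to\ran P$ with $U(QPQ)U^{*}=PQP$ on $\ran P$. Hence $QPQ|_{\ran Q}$ and $PQP|_{\ran P}$ have the same infimum of spectrum. That common value is $1-\|(I-P)Q\|^2=1-\|(I-Q)P\|^2$, which gives $A_\Gamma=A_{\Z\backslash\Gamma}$.

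The hypothesis $0<|S|<1$ only guarantees that $\ran P$ and $\ran(I-P)$ are nontrivial, so that "Riesz spectrum" is meaningful for both sets. I expect the main obstacle to be this symmetry of the two gap norms; the equivalence itself is routine bookkeeping.
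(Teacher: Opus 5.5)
Your proof is correct. The paper itself gives no proof of this lemma; it quotes it from \cite{MM09} and \cite{BCMS16}. So there is no internal argument to match, but your route fits the paper's own toolkit.

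For the equivalence, Lemma~\ref{lem:projections} already makes it a one-liner. $\Gamma$ is a Riesz spectrum for $S$ if and only if $\|P-Q\|<1$, and $\|(I-P)-(I-Q)\|=\|P-Q\|$. Your split into two one-sided conditions, matched by taking adjoints, is the same content unpacked.

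The equality of the optimal lower bounds is not contained in Lemma~\ref{lem:projections}. That lemma only gives $\|P-Q\|=\max\{\|(I-P)Q\|,\|(I-Q)P\|\}$. Your comparison of $T^*T=QPQ|_{\ran Q}$ with $TT^*=PQP|_{\ran P}$ is exactly the missing piece. It is the operator form of the standard Naimark-complement duality. Since $(I-Q)P$ is the adjoint of $P(I-Q)$, the bound $1-\|P(I-Q)\|^2$ for $\Z\backslash\Gamma$ equals $1-\|(I-Q)P\|^2=\inf_{g\in\ran P,\ \|g\|=1}\|Qg\|^2$. This is the optimal lower frame bound of $E(\Gamma;S)$. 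The claim therefore reduces to the fact that the optimal lower frame bound and the optimal lower Riesz bound of a Riesz basis coincide.

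You can also drop the polar decomposition. For invertible $T$, both $\inf\sigma(T^*T)=\inf_{\|x\|=1}\|Tx\|^2$ and $\inf\sigma(TT^*)=\inf_{\|y\|=1}\|T^*y\|^2$ equal $\|T^{-1}\|^{-2}$.
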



\begin{remark}
\label{rem:early-attempts}
Choosing $t_n = 2^{- n}$ in \eqref{eq:set-S} gives the dyadic set
\[
S_* = \bigcup_{n \geq 0} [ 4^{- n} / 2 , 4^{- n} ) \;\; \subset [ 0 , 1 ) . 
\]
Around 2019, the author examined the set $S_*$ in an effort to disprove the existence of integer Riesz spectra by combining the complementation principle (Lemma~\ref{lem:complement}) with dilation.
This was among the author's first attempts to find a bounded set without a Riesz spectrum.
The existence of such sets was subsequently established by Kozma, Nitzan, and Olevskii \cite{KNO}.

Note that the intervals of $S_*$ are pairwise disjoint, and $| S_* | = \frac{1}{2} \sum_{n \geq 0} 4^{- n} = \frac{2}{3}$.
An interesting property of $S_*$ is
\[
( 0 , 1 ) \backslash S_* = \tfrac{1}{2} S_* . 
\]
Assume that $\Gamma \subset \Z$ is a Riesz spectrum for $S_*$ with optimal lower Riesz bound $a > 0$.
By a dilation, $2 \Gamma$ is a Riesz spectrum for $\tfrac{1}{2} S_*$ with optimal lower bound $a / 2$.
Lemma~\ref{lem:complement} then gives another integer Riesz spectrum $\Z \backslash 2 \Gamma$ for $S_*$, with the same lower bound $a / 2$.
Therefore, the iteration
\[
      \Gamma_0 := \Gamma , \quad
      \Gamma_{j + 1} := \Z \backslash 2 \Gamma_j
      \quad \text{for} \;\; j = 0 , 1 , 2 , \ldots .
\]
produces Riesz spectra whose optimal lower bounds are $2^{- j} a$.
Since these bounds tend to zero, this iteration does not give a uniform lower estimate with which to pass to a limiting frequency set.
It does not by itself prove nonexistence.

However, there is a simple argument excluding the two sets
\[
\Lambda_0 = \bigcup_{j \geq 0} 4^j ( 2 \Z + 1 ) , \quad \Lambda_1 = \Lambda_0 \cup \{ 0 \} .
\]
Every nonzero integer has a unique form $2^k m$ with $k \geq 0$ and $m$ odd, which gives $\Z \backslash 2 \Lambda_0 = \Lambda_1$ and $\Z \backslash 2 \Lambda_1 = \Lambda_0$.
If either set were a Riesz spectrum for $S_*$, the previous argument would make both sets Riesz spectra.
This is impossible, since adding one element to a complete system cannot give a Riesz basis.
This argument excludes these two frequency sets, but does not treat an arbitrary integer spectrum.
Theorem~\ref{thm:main} proves the stronger conclusion for arbitrary real frequencies.  
\end{remark}

We need the following lemma, which can be found in \cite[Lemmas~2.2 and~2.3]{OC}. 
For completeness, we provide a short proof here.

\begin{lemma}[Lemmas~2.2 and~2.3 in \cite{OC}]
\label{lem:projections}
Let $P$ and $Q$ be orthogonal projections on a Hilbert space $\mathcal{H}$ with $\mathcal{P} := \ran P$ and $\mathcal{Q} := \ran Q$.
The restricted map $P : \mathcal{Q} \to \mathcal{P}$ is a bounded isomorphism if and only if $\| P - Q \| < 1$.
In this case, $Q : \mathcal{P} \to \mathcal{Q}$ is also a bounded isomorphism.
Consequently, if $P$ and $\widetilde{P}$ are orthogonal projections with $\ran P \subsetneq \ran \widetilde{P}$, then there is no orthogonal projection $Q$ satisfying $\| P - Q \| < 1$ and $\| \widetilde{P} - Q \| < 1$.  
\end{lemma}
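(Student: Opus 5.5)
We need to prove Lemma~\ref{lem:projections}: the restriction $P|_{\mathcal Q}:\mathcal Q\to\mathcal P$ is a bounded isomorphism if and only if $\|P-Q\|<1$, in which case $Q|_{\mathcal P}$ is one too; and two projections with strictly nested ranges cannot both lie within distance $<1$ of the same $Q$. The plan is to translate both conditions into two-sided lower bounds and use the standard identity
\[
\|P-Q\| = \max\bigl\{\|(I-P)Q\|,\ \|(I-Q)P\|\bigr\}.
\]

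\emph{The identity.} I would prove it by writing, for $x\in\mathcal H$,
\[
(P-Q)x = P(I-Q)x - (I-P)Qx .
\]
The two summands are orthogonal, lying in $\mathcal P$ and $\mathcal P^\perp$. Hence
\[
\|(P-Q)x\|^2 = \|P(I-Q)x\|^2 + \|(I-P)Qx\|^2 .
\]
Testing on $x\in\mathcal Q$ and on $x\in\mathcal Q^\perp$ gives "$\ge$". Conversely, both summands are bounded by $\max(\|P(I-Q)\|^2,\|(I-P)Q\|^2)$ times $\|(I-Q)x\|^2+\|Qx\|^2=\|x\|^2$, which gives "$\le$". Finally $\|P(I-Q)\|=\|(I-Q)P\|$ by taking adjoints.

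\emph{Reformulation.} For $y\in\mathcal Q$ we have $\|Py\|^2=\|y\|^2-\|(I-P)y\|^2$. So $P|_{\mathcal Q}$ is bounded below, i.e.\ injective with closed range, exactly when $\|(I-P)Q\|<1$. Its range is dense in $\mathcal P$ exactly when $(P|_{\mathcal Q})^*=QP|_{\mathcal P}:\mathcal P\to\mathcal Q$ is injective. Surjectivity of $P|_{\mathcal Q}$ amounts to this adjoint being bounded below, i.e.\ $\|(I-Q)P\|<1$.

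The proof then runs as follows.
\begin{itemize}
\item If $\|P-Q\|<1$, both quantities in the identity are $<1$. Then $P|_{\mathcal Q}$ is bounded below, and its adjoint is bounded below, so $P|_{\mathcal Q}$ is a bounded isomorphism. The conditions are symmetric in $P$ and $Q$, so $Q|_{\mathcal P}$ is also an isomorphism.
\item Conversely, if $P|_{\mathcal Q}$ is an isomorphism, then $\|(I-P)Q\|<1$. Its adjoint $Q|_{\mathcal P}$ is then also an isomorphism, hence bounded below, so $\|(I-Q)P\|<1$. By the identity, $\|P-Q\|<1$.
\end{itemize}
One delicate point is that "bounded below" must be read as a strict norm bound $<1$. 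This holds because a bounded-below constant $c>0$ gives $\|(I-P)y\|^2\le(1-c^2)\|y\|^2$.

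\emph{Consequence.} Suppose $\|P-Q\|<1$ and $\|\widetilde P-Q\|<1$. Then $Q|_{\mathcal P}:\mathcal P\to\mathcal Q$ and $Q|_{\widetilde{\mathcal P}}:\widetilde{\mathcal P}\to\mathcal Q$ are both bijections. Since $\mathcal P\subsetneq\widetilde{\mathcal P}$, pick $z\in\widetilde{\mathcal P}$ with $z\notin\mathcal P$. Surjectivity of $Q|_{\mathcal P}$ gives $w\in\mathcal P$ with $Qw=Qz$. Injectivity of $Q$ on $\widetilde{\mathcal P}\ni w,z$ then forces $z=w\in\mathcal P$, a contradiction.

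The only step with real content is the norm identity; everything else is bookkeeping with adjoints.
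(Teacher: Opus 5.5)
Your proof is correct and takes essentially the same route as the paper. Both reduce everything to $\|P-Q\|=\max\{\|(I-P)Q\|,\|(I-Q)P\|\}$, use that $Q|_{\mathcal P}$ is the adjoint of $P|_{\mathcal Q}$ to get bounded-belowness and dense range, and obtain the nested-range consequence by the identical injectivity/surjectivity argument. The only difference is cosmetic. You prove the norm identity by splitting $(P-Q)x=P(I-Q)x-(I-P)Qx$ into orthogonal pieces, whereas the paper uses $(P-Q)^2=P(I-Q)P+(I-P)Q(I-P)$ together with the $C^*$-identity.
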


\begin{proof}
For any $p \in \mathcal{P}$ and $q \in \mathcal{Q}$, we have 
\[
\langle P q, p \rangle_{\mathcal{P}} = \langle P q, p \rangle_{\mathcal{H}} 
= \langle q, p \rangle_{\mathcal{H}} 
= \langle q, Q p \rangle_{\mathcal{H}} 
= \langle q, Q p \rangle_{\mathcal{Q}} , 
\]
which shows that $Q : \mathcal{P} \to \mathcal{Q}$ is the adjoint of $P : \mathcal{Q} \to \mathcal{P}$.
If the map $P : \mathcal{Q} \to \mathcal{P}$ is a bounded isomorphism, then the map and its adjoint are are bounded below by a common constant $a \in ( 0 , 1 ]$, so that $\| ( I - Q ) P \| \leq \sqrt{1 - a^2}$ and $\| ( I - P ) Q \| \leq \sqrt{1 - a^2}$. 
Note that 
\[
( P - Q )^2
= P (I-Q) P + (I-P) Q (I-P) , 
\]
where $P (I-Q) P$ and $(I-P) Q (I-P)$ act on  $\mathcal{P}$ and $\mathcal{P}^{\perp}$, respectively.
Since $P - Q$ is self-adjoint, we obtain
\[
\begin{split}
      \| P - Q \|^2
      &= \max \{ \| P ( I - Q ) P \| , \| ( I - P ) Q ( I - P ) \| \} \\
      &= \max \{ \| P^* ( I - Q )^* ( I - Q ) P \| , \| ( I - P ) Q Q^* ( I - P )^* \| \} \\
      &= \max \{ \| ( I - Q ) P \|^2 , \| ( I - P ) Q \|^2 \} \\
      &\leq 1 - a^2 < 1 ,
\end{split}
\]
where we used that $\| A^* A \| = \| A A^* \| = \| A \|^2$ for bounded operators $A$. 
Conversely, if $\| P - Q \| = \rho < 1$, then both restricted maps $P : \mathcal{Q} \to \mathcal{P}$ and $Q : \mathcal{P} \to \mathcal{Q}$ are bounded below by $\sqrt{1 - \rho^2}$.
These maps have closed ranges, which must be dense since their adjoints are injective.  
Therefore, both $P : \mathcal{Q} \to \mathcal{P}$ and $Q : \mathcal{P} \to \mathcal{Q}$ are bounded isomorphisms.

Finally, let $P$ and $\widetilde{P}$ be orthogonal projections with $\mathcal{P}  := \ran P \subsetneq \widetilde{ \mathcal{P} } := \ran \widetilde{P}$, and suppose that there eixsts an orthogonal projection $Q$ satisfying $\| P - Q \| < 1$ and $\| \widetilde{P} - Q \| < 1$. 
Then the restricted maps $Q : \mathcal{P} \to \mathcal{Q}$ and $Q : \widetilde{\mathcal{P}} \to \mathcal{Q}$ are bounded isomorphisms. 
There exist $h \in  \widetilde{ \mathcal{P} } \backslash\mathcal{P}$ and $g \in \mathcal{P}$ with $Q g = Q h$, which contradicts with the injectivity of $Q$ on $\ran \widetilde{P}$.
\end{proof}

Finally, we will need the following lemma which allows us to round real frequencies to points on a sufficiently fine lattice, similarly as in \cite[p.\,2010]{KNO}.

\begin{lemma}\label{lem:rounding}
Let $S \subset [ 0 , 1 ]$ be a positive measurable set, and assume that $\Lambda \subset \R$ is a Riesz spectrum for $S$ with bounds $A , B$ (which implies $\delta = \inf \{ | \lambda_1 - \lambda_2 | : \lambda_1 , \lambda_2 \in \Lambda  ,\ \lambda_1 \neq \lambda_2 \} > 0$).
Let $M \in \N$ satisfy $M > \frac{1}{\delta}$ and 
\begin{equation}\label{eq:rounding-condition}
    \sqrt{B} \big( e^{\pi / M} - 1 \big) < \sqrt{A} . 
\end{equation} 
For each $\lambda \in \Lambda$, choose a nearest point $\lambda' \in \frac{1}{M} \Z$.
Then the map $\lambda \mapsto \lambda'$ is injective, and its image $\Lambda' = \{ \lambda' : \lambda \in \Lambda \}$ is a Riesz spectrum for $S$.
\end{lemma}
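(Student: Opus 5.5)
Injectivity comes from separation, and the Riesz property of $\Lambda'$ comes from a Paley--Wiener/Kadec type perturbation estimate in the synthesis operator norm.

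\emph{Injectivity.} For each $\lambda\in\Lambda$ we have $|\lambda-\lambda'|\le \frac{1}{2M}$. If $\lambda_1\neq\lambda_2$ had the same image, then $|\lambda_1-\lambda_2|\le \frac1M<\delta$, which contradicts the definition of $\delta$. So the map is injective, and $\Lambda'$ inherits separation: $|\lambda_1'-\lambda_2'|\ge \delta-\frac1M>0$, and in fact $\ge\frac1M$ since the points are distinct lattice points.

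\emph{Perturbation estimate.} Write $\mu_\lambda=\lambda'-\lambda\in[-\frac{1}{2M},\frac{1}{2M}]$. For a finitely supported $c$ and $x\in S\subset[0,1]$, expand
\[
e^{2\pi i\lambda' x}-e^{2\pi i\lambda x}=e^{2\pi i\lambda x}\sum_{k\ge1}\frac{(2\pi i\mu_\lambda x)^k}{k!}.
\]
Then
\[
\Big\|\sum_\lambda c_\lambda(e^{2\pi i\lambda'x}-e^{2\pi i\lambda x})\Big\|_{L^2(S)}\le\sum_{k\ge1}\frac{(2\pi)^k}{k!}\Big\|x^k\sum_\lambda c_\lambda\mu_\lambda^k e^{2\pi i\lambda x}\Big\|_{L^2(S)}.
\]
Since $0\le x\le1$ on $S$, we can drop $x^k$. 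The upper Riesz bound $B$ of $E(\Lambda;S)$ then bounds each term by
\[
\sqrt B\,\Big(\sum_\lambda|c_\lambda|^2|\mu_\lambda|^{2k}\Big)^{1/2}\le\sqrt B\,(2M)^{-k}\|c\|.
\]
Summing over $k$ gives the bound $\sqrt B\,(e^{\pi/M}-1)\|c\|$. Writing $T$ and $T'$ for the synthesis operators of $\Lambda$ and $\Lambda'$ (indexed by $\Lambda$ via the bijection), this says $\|T-T'\|\le\sqrt B(e^{\pi/M}-1)<\sqrt A$ by \eqref{eq:rounding-condition}.

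\emph{Conclusion.} $T$ is a bounded isomorphism of $\ell_2(\Lambda)$ onto $L^2(S)$ with $\|T^{-1}\|\le A^{-1/2}$. Since $\|T-T'\|<\sqrt A\le \|T^{-1}\|^{-1}$, the operator
\[
T'=T\big(I-T^{-1}(T-T')\big)
\]
is invertible by the Neumann series. Hence $T'$ is a bounded isomorphism onto $L^2(S)$, which gives both Riesz inequalities for $E(\Lambda';S)$ with bounds $(\sqrt A-\rho)^2$ and $(\sqrt B+\rho)^2$, where $\rho=\sqrt B(e^{\pi/M}-1)$. Completeness also follows, since $T'$ is surjective.

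The main care point is the perturbation step. The expansion must be applied to finitely supported sequences and then extended by density. The bound must use the Bessel bound of $\Lambda$ itself rather than that of $\Lambda'$, which is why the hypothesis on $M$ involves $B$ and $A$ of the original system. The condition $M>1/\delta$ is used only for injectivity.
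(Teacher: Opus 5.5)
Your proposal is correct and follows essentially the same route as the paper. Injectivity comes from separation, and the Taylor-expansion estimate $\|T'-T\|\le\sqrt{B}\,(e^{\pi/M}-1)<\sqrt{A}$ uses the Bessel bound of $\Lambda$ together with $|x|\le 1$ on $S$. Invertibility of $T'$ then follows from a Neumann series for $I - T^{-1}(T-T')$, exactly as in the paper's argument.
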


\begin{proof}
Note that $| \lambda' - \lambda | \leq \frac{1}{2 M}$ by construction. 
If two distinct points $\lambda_1 , \lambda_2 \in \Lambda$ were both rounded to $\lambda'$, then $| \lambda_1 - \lambda_2 | \leq | \lambda_1 - \lambda' | + | \lambda' - \lambda_2 | \leq 1 / M < \delta$, which yields a contradiction. 
Therefore, the map $\lambda \mapsto \lambda'$ is injective. 
For $\lambda \in \Lambda$, let $\delta_\lambda = \lambda' - \lambda$.
Let $T : \ell_2 ( \Lambda ) \to L^2 ( S )$ be the synthesis operator for $E ( \Lambda; S )$, let $D$ be the diagonal operator with entries $\delta_\lambda$, and let $M_x$ denote the multiplication by $x$ on $L^2 ( S )$.
Note that $\sqrt{A} \| c \| \leq \| T c \| \leq \sqrt{B} \| c \|$ for $c \in \ell_2 ( \Lambda )$ by \eqref{eq:Riesz}, while $\| D \| \leq \frac{1}{2 M}$ and $\| M_x \| \leq 1$.  
Using Taylor expansion on finitely supported vectors, we obtain the perturbed synthesis operator
\[
    T' = T + \sum_{r = 1}^{\infty} \frac{( 2 \pi i )^r}{r !} M_x^r T D^r , 
\]
which converges in operator norm, and
\[
    \| T' - T \| \leq \sqrt{B} \left( e^{\pi / M} - 1 \right)  \; =:  \eta .
\]
By the triangle inequality, we have 
\[
      ( \sqrt{A} - \eta ) \| c \|
      \leq \| T' c \|
      \leq( \sqrt{B} + \eta ) \| c \|  ,
      \quad c \in \ell_2 ( \Lambda ) , 
\]
which establishes the inequaltiy \eqref{eq:Riesz} for $E ( \Lambda' ; S )$ with bounds 
$A' = ( \sqrt{A} - \eta )^2 > 0$ and $B' = ( \sqrt{B} + \eta )^2$.
Moreover, since $\eta < \sqrt{A}$ by \eqref{eq:rounding-condition}, we have $\| T^{- 1} \| \cdot \| T' - T \| \leq \frac{1}{\sqrt{A}} \cdot \eta < 1$, so the operator $I + T^{- 1} ( T' - T )$ is invertible by its Neumann series expansion.
Therefore, $T'$ is an isomorphism.
\end{proof}


\section{Proof of Theorem~\ref{thm:main}}
\label{sec:proof}

We identify $\T = \R / \Z$ with $[ 0 , 1 )$, where addition and one-sided limits are understood modulo~$1$, 
and use the normalized Lebesgue measure on $\T$. 
For a measurable set $S$, we denote by $\one_S$ the characteristic function of $S$.   

A Borel representative of $\one_S$ is a Borel $\{ 0 , 1 \}$-valued function equal to $\one_S$ almost everywhere.
The pointwise limits below refer to the chosen representative.
Note that changing a domain by a null set does not affect the Riesz basis property.


The spectral measure used below is associated with a unitary operator; it is distinct from a Riesz spectrum, which is a set of frequencies.
Recall that a unitary operator $V$ on a Hilbert space $\mathcal{K}$ has a spectral resolution
\[
      V = \int_\T e^{2 \pi i \theta} \, d \mathsf{E} ( \theta ) .
\]
Here, $\mathsf{E} ( B )$ is an orthogonal projection for each Borel set $B \subset  \T$.
For $f \in \mathcal{K}$, let $\nu_f ( B ) = \langle \mathsf{E} ( B ) f , f \rangle$.
This defines a finite positive measure with $\nu_f ( \T ) = \| f \|^2$, and
\[
      \Big\| \int_\T b ( \theta ) \, d \mathsf{E} ( \theta ) f \Big\|^2
      = \int_\T | b ( \theta ) |^2 \, d \nu_f ( \theta )
\]
for every bounded Borel function $b$.
In particular, a $\{ 0 , 1 \}$-valued symbol gives an orthogonal projection.
The range of $\mathsf{E} ( \{ 0 \} )$ is the subspace of fixed vectors of $V$. 
We will use strong operator convergence, meaning that $A_n f \to A f$ in norm for every fixed $f \in \mathcal{K}$.

\begin{lemma}[Stationary representation]
\label{lem:stationary}
Let $S \subset \T$ be measurable, and let $s : \T \to \{ 0 , 1 \}$ be a Borel representative of $\one_S$.
Assume that $\Gamma \subset \Z$ is a Riesz spectrum for $S$.
Then there exist a separable Hilbert space $\mathcal{K}$, a projection-valued spectral measure $\mathsf{E}$ on $\T$ acting on $\mathcal{K}$, an orthogonal projection $Q$ on $\mathcal{K}$, and a constant $0 \leq \rho < 1$ such that $\mathsf{E} ( \{ 0 \} ) \neq 0$ and
\begin{equation}\label{eq:phase-gap}
\begin{split}
    & P_t  := \int_\T s ( t - \theta ) \, d \mathsf{E} ( \theta )  \quad \text{for all} \;\;  t \in \T   , \\
    & \| P_t - Q \|  \leq \rho   \quad \text{for all} \;\;  t \in G  ,
\end{split}
\end{equation}
where $G \subset \T$ is a set of full Lebesgue measure.
Here, each $P_t$ is an orthogonal projection, and $t \mapsto P_t f$ is strongly measurable for every $f \in \mathcal{K}$.
\end{lemma}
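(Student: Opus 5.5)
The plan is to pass from $L^2(\T)$ to an ultrapower-type Hilbert space in which the multiplication operator $e^{2\pi i\theta}$ acquires an approximate eigenvector at $\theta=0$ that becomes an honest fixed vector. In $L^2(\T)$ (Fourier picture, $\ell_2(\Z)\cong L^2(\T)$) let $Q_0$ be the orthogonal projection onto $\overline{\mathrm{span}}\{e^{2\pi i\gamma x}:\gamma\in\Gamma\}$. For $t\in\T$ let $R_t$ be multiplication by $s(t-\theta)=\one_{t-S}(\theta)$.

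\textbf{Step 1 (the bound in $L^2(\T)$).} Since translation and reflection of the domain act diagonally (up to unimodular constants, and conjugation) on the exponentials, $\Gamma$ or $-\Gamma$ is a Riesz spectrum for $t-S$ with the same bounds. After replacing $Q_0$ by the projection onto the span of $E(\pm\Gamma)$ as appropriate, the map $R_t:\ran Q_0\to\ran R_t$ is exactly the synthesis isomorphism~\eqref{eq:RieszBasisSynthesisOperator} transported. Lemma~\ref{lem:projections} then gives $\|R_t-Q_0\|\le\rho:=\sqrt{1-A/B'}<1$. The constant is uniform in $t$ because it depends only on the Riesz bounds.

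\textbf{Step 2 (ultrapower).} Fix a free ultrafilter (or a Banach limit) $\mathcal U$ on $\N$. On bounded sequences $(f_k)\subset L^2(\T)$ put $\langle f,g\rangle=\lim_{\mathcal U}\langle f_k,g_k\rangle$, quotient by null sequences, and complete to get $\widetilde{\mathcal K}$. Uniformly bounded componentwise operators descend, with norms not increasing. Thus $V:=(M_{e^{2\pi i\theta}})_k$ is unitary, $Q:=(Q_0)_k$ is an orthogonal projection, and $\widetilde P_t:=(R_t)_k$ is an orthogonal projection with $\|\widetilde P_t-Q\|\le\rho$ for every $t$. The vector $f_k=\sqrt{k}\,\one_{[0,1/k)}$ satisfies $\|f_k\|=1$ and $\|e^{2\pi i\theta}f_k-f_k\|\to0$, so it yields a nonzero fixed vector of $V$, i.e.\ $\mathsf E(\{0\})\ne0$ for the spectral measure $\mathsf E$ of $V$. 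For the separable $\mathcal K$, I would take the smallest closed subspace containing this vector and invariant under $V,V^*,Q$. It is generated by countably many words, hence separable, and reduces $V$ and $Q$. It therefore reduces every Borel function of $V$, in particular the $P_t$ defined in~\eqref{eq:phase-gap}.

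\textbf{Step 3 (main obstacle: identifying $P_t$ with $\widetilde P_t$).} The functional calculus $P_t=\int s(t-\theta)\,d\mathsf E(\theta)$ agrees with the componentwise operator only for continuous symbols, since $s$ is discontinuous. I would repair this by averaging in $t$. Take a trigonometric polynomial $c$ with $\|s-c\|_{L^2(\T)}<\varepsilon$. By Fubini and translation invariance, $\int_\T|s(t-\theta)-c(t-\theta)|^2\,dt<\varepsilon^2$ for every $\theta$. Hence for $f=(f_k)$,
\[
\int_\T\|(\widetilde P_t-c(t-V))f\|^2dt\le\varepsilon^2\|f\|^2,\qquad \int_\T\|(P_t-c(t-V))f\|^2dt=\int\!\!\int|s-c|^2(t-\theta)\,dt\,d\nu_f\le\varepsilon^2\|f\|^2 .
\]
Here $c(t-V)$ is the same componentwise operator in both pictures, and the passage of the $dt$-integral through the ultralimit is justified by Fatou or dominated bounds. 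Letting $\varepsilon\to0$ gives $P_tf=\widetilde P_tf$ for a.e.\ $t$. Doing this for a countable dense set of $f\in\mathcal K$ gives a full-measure set $G$ with $P_t=\widetilde P_t$ on $\mathcal K$ for all $t\in G$, and hence $\|P_t-Q\|\le\rho$ there. Strong measurability of $t\mapsto P_tf$ follows from the same approximation: it is an $L^2(dt)$-limit of the continuous maps $t\mapsto c(t-V)f$. Finally, each $P_t$ is a projection because $s$ is $\{0,1\}$-valued.
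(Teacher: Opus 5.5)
Your Steps 1 and 2 are sound and give a genuinely different route from the paper. You translate the domain (the whole family $R_t$, $t\in\T$) instead of the frequency set, and you obtain the fixed vector as an approximate eigenvector in an ultrapower. The paper instead uses the constant function on the orbit closure $X$ of $\one_\Gamma$, equipped with an invariant measure. Two minor points: $\Gamma$ itself is a Riesz spectrum for $t-S$ with the same bounds (reflection plus complex conjugation, then unimodular factors), so no $\pm$ hedge is needed. Also, the proof of Lemma~\ref{lem:projections} gives $\rho=\sqrt{1-A}$, since $R_t|_{\ran Q_0}$ and its adjoint are both bounded below by $\sqrt A$; your $B'$ is undefined.

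The gap is in Step 3, at the inequality $\int_\T\|(\widetilde P_t-c(t-V))f\|^2\,dt\le\varepsilon^2\|f\|^2$. The integrand is $\lim_{\mathcal U}\phi_k(t)$ with $\phi_k(t)=\int_\T|s-c|^2(t-\theta)\,|f_k(\theta)|^2\,d\theta$, and all you know is $\int_\T\phi_k\,dt=\|s-c\|_2^2\|f_k\|^2$. Neither tool you cite moves $\int dt$ through $\lim_{\mathcal U}$. There is no pointwise convergence, so dominated convergence is unavailable. Fatou controls $\int\liminf_k\phi_k$, and $\lim_{\mathcal U}\phi_k\ge\liminf_k\phi_k$ points the wrong way. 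Worse, an ultralimit (or Banach limit) of measurable functions need not be measurable; for example, $t\mapsto\lim_{\mathcal U}r_k(t)$ is not measurable for the Rademacher functions $r_k$. Since $s$ is an arbitrary Borel function and $|f_k|^2$ concentrates (already for your $\xi$), the $\phi_k$ need not be equicontinuous. For $\xi$ itself, Lebesgue differentiation rescues the pointwise limit, but nothing comparable is available for vectors such as $Q\xi$ or $VQ\xi$. So the identification $P_tf=\widetilde P_tf$ a.e.\ is unsupported, and it is more than you need.

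The repair is to keep the Riesz estimate at finite level and take ultralimits only of equicontinuous quantities. Let $V_0$ be multiplication by $e^{2\pi i\theta}$ on $L^2(\T)$. The bound $\|(R_t-Q_0)f_k\|\le\rho\|f_k\|$ gives
\[
\int_\T\big(\|(c(t-V_0)-Q_0)f_k\|-\rho\|f_k\|\big)_+^2\,dt\le\int_\T\|(c(t-V_0)-R_t)f_k\|^2\,dt=\|s-c\|_2^2\,\|f_k\|^2 .
\]
Since $c$ is a trigonometric polynomial, $t\mapsto\|(c(t-V_0)-Q_0)f_k\|$ is Lipschitz uniformly in $k$. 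Hence its ultralimit $\|(c(t-V)-Q)f\|$ is attained uniformly in $t$, and the integral passes to the limit. Combine this with your correct spectral estimate $\int_\T\|(P_t-c(t-V))f\|^2\,dt\le\varepsilon^2\|f\|^2$. This yields $\int_\T(\|(P_t-Q)f\|-\rho\|f\|)_+^2\,dt\le4\varepsilon^2\|f\|^2$, hence $\|(P_t-Q)f\|\le\rho\|f\|$ for a.e.\ $t$, and your countable dense subset of $\mathcal K$ finishes the proof.

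The paper avoids this issue altogether. With $\mathcal K=L^2(X,\mu)$, the unitaries $W$ and $\mathcal G$ make $P$ \emph{exactly} equal to the decomposable operator $F(t)\mapsto P_tF(t)$ on $L^2(\T;\mathcal K)$. All $t$-integrands are therefore measurable from the outset, and the a.e.\ bound is read off directly.
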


Note that this lemma replaces the original frequency set with a shift-invariant probability space of its translates and their limits.
On the resulting space $\mathcal{K}$, the constant function is a nonzero fixed vector of the shift, which implies that $\mathsf{E}(\{0\}) \neq 0$. 
Also, the Riesz basis assumption yields the bound $\| P_t - Q \|  \leq \rho < 1$ for almost every $t$. 
These are the two properties needed in Proposition~\ref{prop:oscillation} below.


\begin{proof}
\textbf{Step 1}.~Construction of the stationary space \\[1mm]
Let $\mathcal{F} : \ell_2 ( \Z ) \to L^2 ( \T )$ be the unitary Fourier series map, $\mathcal{F} c ( x ) = \sum_n c_n e^{2 \pi i n x}$, and let
\[
    P = \mathcal{F}^{- 1} M_s \mathcal{F} , \quad ( Q_\Gamma c )_n = \one_\Gamma ( n ) c_n .
\]
Here, $M_s$ denotes multiplication by $s$.
  Since $s = \one_S$ almost everywhere, $M_s$ is the orthogonal projection onto the
  functions supported in $S$.
  We identify $\ell_2 ( \Gamma )$ with $Q_\Gamma \ell_2 ( \Z )$ by extending
  coefficient sequences by zero.
  We also identify $L^2 ( S )$ with the corresponding subspace of $L^2 ( \T )$ by
  extending functions by zero.
  Under these identifications, the synthesis operator followed by $\mathcal{F}^{- 1}$
  is
\[
c \longmapsto \mathcal{F}^{- 1} ( \one_S \mathcal{F} c ) = Pc , 
\]
which shows that $\Gamma$ is a Riesz spectrum for $S$ if and only if the map
$P : Q_\Gamma \ell_2 ( \Z ) \to
  P \ell_2 ( \Z )$ is a bounded isomorphism.
Therefore, Lemma~\ref{lem:projections} gives
\begin{equation}\label{eq:original-gap}
    \| P - Q_\Gamma \| = \rho < 1 .
\end{equation}

Let $\omega^\Gamma = ( \one_\Gamma ( n ) )_{n \in \Z}$, let $T$ be the shift $( T \omega )_n = \omega_{n + 1}$, and let
\[
    X = \overline{\{ T^k \omega^\Gamma : k \in \Z \}} \subset \{ 0 , 1 \}^{\Z} ,
\]
where the closure is taken in the product topology.
This is a compact metric space, and $T : X \to X$ is a homeomorphism.
For $\omega \in X$, let $Q_\omega$ be the coordinate projection with diagonal $( \omega_n )_n$.
The operator $P$ commutes with the bilateral shift $( U c )_n = c_{n - 1}$, and $Q_{T^k \omega^\Gamma} = U^{- k} Q_\Gamma U^k$.
Thus \eqref{eq:original-gap} holds with $Q_\Gamma$ replaced by every shift of it.
If $\omega^{( m )} \to \omega$ coordinatewise, then for every $c \in \ell_2 ( \Z
  )$,
  \[
      \| ( Q_{\omega^{( m )}} - Q_\omega ) c \|^2
      = \sum_{n \in \Z} | \omega_n^{( m )} - \omega_n |^2 | c_n |^2
      \longrightarrow 0
  \]
  by dominated convergence.
  Applying the uniform bound to each fixed $c$ and passing to the limit preserves the
  bound $\rho$.
Consequently,
\begin{equation}\label{eq:hull-gap}
    \| P - Q_\omega \| \leq \rho \quad ( \omega \in X ) .
\end{equation}

Let \[
      \mu_N = \frac{1}{N} \sum_{k = 0}^{N - 1} \delta_{T^k \omega^\Gamma} .
  \]
  Since $X$ is compact and metrizable, a subsequence converges weakly to a Borel
  probability measure $\mu$.
  For every continuous function $h$ on $X$, we have 
  \[
      \int_X ( h \circ T - h ) \, d \mu_N
      = \frac{ h ( T^N \omega^\Gamma ) - h ( \omega^\Gamma ) }{N}
      \longrightarrow 0 .
  \]
  Passing to the weak limit shows that $\mu$ is $T$-invariant. 
On the separable Hilbert space $\mathcal{K} = L^2 ( X , \mu )$, we define
\[
    V f = f \circ T , \quad Q = M_q , \quad q ( \omega ) = \omega_0 .
\]
Then $V$ is unitary, $Q$ is an orthogonal projection, and the constant function $1$ is a nonzero fixed vector of $V$.
Write the spectral resolution of $V$ in additive notation as
\[
    V = \int_\T e^{2 \pi i \theta} \, d \mathsf{E} ( \theta ) .
\]
In particular, $\mathsf{E} ( \{ 0 \} ) 1 = 1$, so $\mathsf{E} ( \{ 0 \} ) \neq 0$.
Define $P_t$ as in the statement.
Each $P_t$ is an orthogonal projection.
Moreover, $t \mapsto P_t f$ is strongly measurable for every $f \in \mathcal{K}$: scalar spectral integrals give weak measurability, and $\mathcal{K}$ is separable.

\bigskip

\noindent \textbf{Step 2}.~Transfer of the projection estimate \\[1mm]
Consider $L^2 ( X \times \Z , \mu \otimes \text{counting} )$, with $P$ acting in the integer coordinate, and let $( \mathcal{Q} F ) ( \omega , n ) = \omega_n F ( \omega , n )$.
Since $\mathcal{Q} F ( \omega , \cdot ) = Q_\omega F ( \omega , \cdot )$, \eqref{eq:hull-gap} gives
\[
    \| ( P - \mathcal{Q} ) F \|^2 = \int_X \| ( P - Q_\omega ) F ( \omega , \cdot ) \|_{\ell_2 ( \Z )}^2 \, d \mu ( \omega ) \leq \rho^2 \| F \|^2 .
\]
The map
\[
    ( W F ) ( \omega , n ) = F ( T^{- n} \omega , n )
\]
is unitary by the $T$-invariance of $\mu$, and satisfies
\[
    W \mathcal{Q} W^{- 1} = Q , \quad W U W^{- 1} = V^{- 1} U ,
\]
where $Q$ and $V$ act in the $X$ coordinate. 
 Indeed, $( W^{- 1} F ) ( \omega , n ) = F ( T^n \omega , n )$, and direct
  substitution gives
  \[
  \begin{aligned}
      ( W \mathcal{Q} W^{- 1} F ) ( \omega , n )
      &= \omega_0 F ( \omega , n ) , \\
      ( W U W^{- 1} F ) ( \omega , n )
      &= F ( T^{- 1} \omega , n - 1 ) .
  \end{aligned}
  \]
Therefore, $W$ removes the dependence of the coordinate projection on $n$, while the shift acquires the factor $V^{- 1}$.
Fourier series in the integer variable define a unitary map
\[
    \mathcal{G} :
    L^2 ( X \times \Z , \mu \otimes \mathrm{counting} )
    \longrightarrow L^2 ( \T ; \mathcal{K} ) .
\]
For functions supported on finitely many integer coordinates, this map is given by
\[
    ( \mathcal{G} F ) ( t ) ( \omega )
    = \sum_{n \in \Z} F ( \omega , n ) e^{2 \pi i n t} .
\]
Parseval's identity, integrated over $X$, gives an isometric extension to the whole space.
Its range contains every function $t \mapsto e^{2 \pi i n t} f$, where $n \in \Z$ and $f \in \mathcal{K}$.
Their finite linear combinations are dense in $L^2 ( \T ; \mathcal{K} )$, so the extension is onto.
Under this map, $U$ becomes multiplication by $e^{2 \pi i t}$, while $Q$ and $V$ act pointwise on the $\mathcal{K}$-valued functions. 
We write $s [ U ]$ for the bounded Borel functional calculus of $U$ with symbol $e^{2 \pi i t} \mapsto s ( t )$, so that $P = s [ U ]$.
We will show that its transformed action on $L^2 ( \T ; \mathcal{K} )$ is $F ( t ) \mapsto P_t F ( t )$.
Here, $L^2 ( \T ; \mathcal{K} )$ consists of strongly measurable functions $F : \T \to \mathcal{K}$ satisfying $\int_\T \| F ( t ) \|^2 \, dt < \infty$, identified when they agree almost everywhere.
For $k \in \Z$, conjugation by $W$ sends $U^k$ to $( V^{- 1} U )^k$.
After Fourier transformation in $n$, this operator acts at $t$ as
\[
    e^{2 \pi i k t} V^{- k} = \int_\T e^{2 \pi i k ( t - \theta )} \, d \mathsf{E} ( \theta ) .
\]
This establishes the identity for each monomial $x \mapsto e^{2\pi i kx}$ with $k \in \Z$. 
For a trigonometric polynomial
$p(x)=\sum_{k=-N}^{N} a_k e^{2\pi i k x}$,
linearity gives
\[
\begin{split}
\big( \mathcal G W p[U] W^{-1} \mathcal G^{-1} F \big) (t)
&= \sum_{k=-N}^{N} a_k e^{2\pi i kt} \, V^{-k} F(t) \\
&= \Big( \int_{\T} p(t-\theta) \, d\mathsf E(\theta) \Big) F(t) .
\end{split}
\]

\noindent \textbf{Step 3}.~Passage to Borel multipliers \\[1mm]
For fixed $t$, convergence in $L^2 ( \T )$ need not imply convergence of the corresponding spectral integrals: $\mathsf{E}$ may charge points $\theta$ where the approximants evaluated at $t - \theta$ do not converge to $s ( t - \theta )$.
Therefore, we prove the multiplier identity on $L^2 ( \T ; \mathcal{K} )$ by integrating in $t$.
  Let $s_j = F_j * s$, where $F_j$ is the Fej\'er kernel
  \[
      F_j ( t ) = \frac{1}{j + 1}
      \Big| \sum_{m = 0}^j e^{2 \pi i m t} \Big|^2 .
  \]
Note that each $s_j$ is a trigonometric polynomial.
Since $F_j \geq 0$ and $\int_\T F_j = 1$, we have $0 \leq s_j \leq 1$.
The approximation property of the Fej\'er kernels gives $s_j \to s$ in $L^2 ( \T )$.
Therefore, we have $s_j [ U ] \to s [ U ]$ strongly: on the Fourier side this follows first for bounded functions from $L^2$ convergence, and then for all $L^2$ functions by density and uniform boundedness.
The same strong convergence holds after adjoining the $X$ coordinate and conjugating by the above unitary maps.
On the other hand, we have 
\begin{equation}\label{eq:borel-justification}
    \int_\T \Big\| \int_\T ( s_j ( t - \theta ) - s ( t - \theta ) ) \, d \mathsf{E} ( \theta ) f \Big\|^2 dt = \| s_j - s \|_{L^2 ( \T )}^2 \| f \|^2 ,
    \quad f \in \mathcal{K} . 
\end{equation}
To see this equality, let $\nu_f ( B ) = \langle \mathsf{E} ( B ) f , f \rangle$ and note that 
\[
\Big\| \int_\T ( s_j ( t - \theta ) - s ( t - \theta ) ) \, d \mathsf{E} ( \theta ) f \Big\|^2
=  \int_\T | s_j ( t - \theta ) - s ( t - \theta ) |^2  \, d \nu_f ( \theta ) ,
\quad t \in \T . 
\]
By Fubini's theorem, the left hand side of \eqref{eq:borel-justification} is equal to 
\[
\int_\T  \int_\T | s_j ( t - \theta ) - s ( t - \theta ) |^2  \, dt  \, d \nu_f ( \theta ) .
\]
For each fixed $\theta$, translation invariance gives
\[
\int_\T | s_j ( t - \theta ) - s ( t - \theta ) |^2 \, dt
= \| s_j - s \|_{L^2 ( \T )}^2 ,
\]
and therefore, 
\[
\begin{split}
\int_\T  \int_\T | s_j ( t - \theta ) - s ( t - \theta ) |^2  \, dt  \, d \nu_f ( \theta )  
&= \int_\T  \| s_j - s \|_{L^2 ( \T )}^2  \, d \nu_f ( \theta )  \\
&=   \| s_j - s \|_{L^2 ( \T )}^2  \, \nu_f ( \T )  \\
&=   \| s_j - s \|_{L^2 ( \T )}^2  \, \| f \|^2 . 
\end{split}
\]
Let $\mathcal{A}_j$ denote the transformed operator corresponding
to $s_j$, so that
\[
    ( \mathcal{A}_j F ) ( t )
    = \int_\T s_j ( t - \theta ) \, d \mathsf{E} ( \theta ) F ( t ),
\]
and define $\mathcal{A}$ by
$( \mathcal{A} F ) ( t ) = P_t F ( t )$.
We first show convergence for a simple function
\[
    F ( t ) = \sum_{\ell = 1}^m \one_{J_\ell} ( t ) f_\ell ,
\]
where the sets $J_\ell \subset \T$ are measurable and pairwise
disjoint, and $f_\ell \in \mathcal{K}$.
By \eqref{eq:borel-justification}, we have
\[
\begin{split}
    \| ( \mathcal{A}_j - \mathcal{A} ) F   \|_{L^2 ( \T ; \mathcal{K} )}^2
    &=  \sum_{\ell = 1}^m \int_{J_\ell}   \Big\|  \int_\T \big( s_j ( t - \theta ) - s ( t - \theta ) \big) \, d \mathsf{E} ( \theta ) f_\ell  \Big\|_{\mathcal{K}}^2 \, dt \\
    &\leq  \| s_j - s \|_{L^2 ( \T )}^2   \sum_{\ell = 1}^m \| f_\ell \|_{\mathcal{K}}^2
    \longrightarrow 0 .
\end{split}
\]
Therefore, we have $\mathcal{A}_j F \to \mathcal{A} F$ for every simple $\mathcal{K}$-valued function $F$.

To pass to an arbitrary $F \in L^2 ( \T ; \mathcal{K} )$, note that $\| \mathcal{A}_j \| \leq 1$ and $\| \mathcal{A} \| \leq 1$, since $0 \leq s_j \leq 1$ and each $P_t$ is an orthogonal projection.
Given $\varepsilon > 0$, choose a simple function $H$ such that
$\| F - H \|_{L^2 ( \T ; \mathcal{K} )} < \varepsilon$.
Then
\[
\begin{split}
    \| ( \mathcal{A}_j - \mathcal{A} ) F  \|_{L^2 ( \T ; \mathcal{K} )}
    &\leq   2 \| F - H \|_{L^2 ( \T ; \mathcal{K} )} + \| ( \mathcal{A}_j - \mathcal{A} ) H  \|_{L^2 ( \T ; \mathcal{K} )} \\
    &<  2 \varepsilon  + \| ( \mathcal{A}_j - \mathcal{A} ) H \|_{L^2 ( \T ; \mathcal{K} )} .
\end{split}
\]
For this fixed $H$, the last term tends to zero as $j \to \infty$.
Since $\varepsilon > 0$ is arbitrary, we conclude that
$\mathcal{A}_j F \to \mathcal{A} F$ for every
$F \in L^2 ( \T ; \mathcal{K} )$.
In other words, the transformed operators $\mathcal{A}_j$ converge strongly to $\mathcal{A}$, given by $F ( t ) \mapsto P_t F ( t )$.
Since $s_j [ U ] \to s [ U ] = P$ strongly, the operators $\mathcal{A}_j$ also converge strongly to $\mathcal{G} W P W^{-1} \mathcal{G}^{-1}$.
By uniqueness of the strong operator limit, we obtain
\[
    \mathcal{A}
    = \mathcal{G} W P W^{-1} \mathcal{G}^{-1}, 
\]
which establishes the multiplier identity for $s$.
The transferred norm bound is given by
\[
    \int_\T \| ( P_t - Q ) F ( t ) \|^2 \, dt \leq \rho^2 \int_\T \| F ( t ) \|^2 \, dt , 
    \quad  F \in L^2 ( \T ; \mathcal{K} ) .
\]
For fixed $f \in \mathcal{K}$, we can apply this estimate to $F ( t ) = \one_J ( t ) f$ for a measurable set $J  \subset \T$ to get 
  \[
      \int_J \| ( P_t - Q ) f \|^2 \, dt
      \leq \rho^2 | J | \| f \|^2 .
  \]
Since this holds for every measurable set $J  \subset \T$, we have $\| ( P_t - Q ) f \| \leq \rho \| f  \|$ outside a null set which may depend on $f$.
Choose a countable dense subset of $\mathcal{K}$ and intersect the corresponding sets of full measure.
Then the resulting set $G$ still has full measure.
For each $t \in G$, continuity in $f$ extends the estimate from the dense subset to all of $\mathcal{K}$, so that \eqref{eq:phase-gap} holds. 
Note that changing $s$ on a null set may change the projections $P_t$ and the exceptional set, but it leaves the operator on $L^2 ( \T ; \mathcal{K} )$ unchanged.
\end{proof}

\begin{remark} 
For a domain in $\R$, the criterion in \cite[Remark~6.4]{OC} requires, among other hypotheses, a finite nonzero positive measure $\nu$ on its boundary $B \subset \R$ such that
\[
      \nu ( B \cap ( B - \theta ) ) = 0
      \quad \text{for every } \theta \in \R \backslash \{ 0 \} .
\]
This condition fails for the boundary $B = \{ 0 \} \cup \{ t_n : n \geq 0 \}$ of a set in Theorem~\ref{thm:main}.
Indeed, a finite nonzero positive measure on the countable set $B$ has an atom $x$.
Choose $y \in B \backslash \{ x \}$ and let $\theta = y - x$.
Then $x \in B \cap ( B - \theta )$, so this intersection has positive $\nu$-measure.
Instead, we approach the exceptional point from the same side through the set and its complement.
The right hand limits at every other point make the two limiting symbols agree at every nonzero spectral point.
\end{remark}

The following criterion isolates the geometric part of the argument.
Its first hypothesis makes the two limiting symbols agree at every nonzero spectral point.

\begin{proposition}\label{prop:oscillation}
Let $S \subset \T$ be measurable, and let $s : \T \to \{ 0 , 1 \}$ be a Borel representative of $\one_S$.
Assume that there is a point $a \in \T$ such that
\begin{enumerate}
\renewcommand{\labelenumi}{(\roman{enumi})}
\item for every $x \neq a$, the limit $\lim_{u \to 0^+} s ( x + u )$ exists;
\item for every sufficiently small $\varepsilon > 0$, the sets $\{ u \in ( 0 , \varepsilon ) : s ( a + u ) = j \}$, $j = 0 , 1$, have positive Lebesgue measure.
\end{enumerate}
Then $S$ admits no integer Riesz spectrum.
\end{proposition}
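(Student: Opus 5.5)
Suppose, for contradiction, that $\Gamma \subset \Z$ is an integer Riesz spectrum for $S$. Lemma~\ref{lem:stationary} then supplies $\mathcal{K}$, the spectral measure $\mathsf{E}$ with $\mathsf{E}(\{0\}) \neq 0$, the projection $Q$, the constant $\rho<1$, and the full-measure set $G$ on which $\|P_t - Q\| \le \rho$. The plan is to approach $t=a$ from the right along two sequences in $G$: one along which $s(t)=1$, and one along which $s(t)=0$. We then pass to strong limits of $P_t$ and obtain two orthogonal projections $\widetilde P \supsetneq P$ (in the sense of ranges), both within distance $\rho$ of $Q$. This contradicts the last part of Lemma~\ref{lem:projections}.

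\textbf{Step 1: choosing the sequences.} By (ii), for each small $\varepsilon$ both sets $\{u\in(0,\varepsilon): s(a+u)=j\}$ have positive measure, so each meets the full-measure set $G-a$. Hence we can pick $u_m^{(1)}, u_m^{(0)} \downarrow 0$ with $a+u_m^{(j)}\in G$ and $s(a+u_m^{(j)})=j$.

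\textbf{Step 2: convergence of $P_{a+u_m^{(j)}}$.} By the spectral calculus,
\[
\|(P_{a+u}-R)f\|^2=\int_\T |s(a+u-\theta)-r(\theta)|^2\,d\nu_f(\theta).
\]
Define $r_j(\theta)$ as follows. For $\theta\neq 0$ we have $a-\theta\neq a$, so by (i) the limit $\lim_{u\to0^+} s(a-\theta+u)$ exists; set $r_j(\theta)$ equal to this common value (it does not depend on $j$). Set $r_j(0)=j$. Then $r_j$ is a Borel $\{0,1\}$-valued function; Borel measurability follows because it is a pointwise limit along $u=1/k$ of Borel functions, after checking that the limit along an arbitrary sequence agrees with the limit along $1/k$, which (i) ensures. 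Moreover $s(a+u_m^{(j)}-\theta)\to r_j(\theta)$ for every $\theta$: for $\theta\neq0$ by (i), and for $\theta=0$ by the choice of the sequence. Dominated convergence then gives $P_{a+u_m^{(j)}}\to R_j:=\int r_j\,d\mathsf{E}$ strongly. Each $R_j$ is an orthogonal projection, and $R_1-R_0=\mathsf{E}(\{0\})\neq0$, which is orthogonal to $R_0$ because $r_1-r_0=\one_{\{0\}}$ and $r_0\,\one_{\{0\}}=0$. Hence $\ran R_0\subsetneq \ran R_1$.

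\textbf{Step 3: passing the bound to the limits.} For each fixed $f$ we have $\|(P_{t_m}-Q)f\|\le\rho\|f\|$, and strong convergence gives $\|(R_j-Q)f\|\le\rho\|f\|$. Thus $\|R_j-Q\|\le\rho<1$ for $j=0,1$. Lemma~\ref{lem:projections}, applied with $P=R_0$ and $\widetilde P=R_1$, now yields the contradiction.

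The main obstacle is Step 2. One has to make sure the pointwise convergence of the symbols holds at \emph{every} $\theta$ in the support of $\nu_f$, not merely almost everywhere, since $\nu_f$ may have atoms. This is exactly why (i) is required at every $x\neq a$, and why the point $\theta=0$, where the two sequences disagree, must be handled separately. It is this disagreement at $\theta=0$ that produces the nonzero difference $\mathsf{E}(\{0\})$ between the two limiting projections.
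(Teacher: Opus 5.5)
Your proposal is correct and follows the paper's proof essentially step for step. It uses the same two sequences in $G$ approaching $a$ from the right, the same Borel limiting symbols that agree away from $\theta=0$, strong convergence via dominated convergence against $\nu_f$, the decomposition $R_1=R_0+\mathsf{E}(\{0\})$ giving strictly nested ranges, and the contradiction from the last part of Lemma~\ref{lem:projections}.
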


\begin{proof}
Assume that $\Gamma \subset \Z$ is a Riesz spectrum for $S$.
Take $\mathcal{K} , \mathsf{E} , Q , \rho$ and $G$ from Lemma~\ref{lem:stationary}, together with the projections $P_t$.

Since the sets in assumption (ii) have positive measure and $G$ has full measure, for each $j \in \{ 0 , 1 \}$ we can choose a decreasing sequence of positive numbers $u_k^{( j )}$ tending to $0$ such that $s ( a + u_k^{( j )} ) = j$ and $a + u_k^{( j )} \in G$.
For $\theta \neq 0$, assumption (i) gives
\[
    \lim_{k \to \infty} s ( a + u_k^{( j )} - \theta ) = \lim_{u \to 0^+} s ( a - \theta + u ) ,
\]
independently of $j$.
At $\theta = 0$, the two limits are $0$ and $1$, respectively.
Define
\[
    r ( \theta ) =
    \begin{cases}
    \displaystyle \lim_{u \to 0^+} s ( a - \theta + u ) , & \theta \neq 0 , \\[3pt]
    0 , & \theta = 0 .
    \end{cases}
\]
The function $r$ is Borel, since away from $0$ it is the pointwise limit of the Borel functions $\theta \mapsto s ( a - \theta + 1 / k )$ as $k \to \infty$.
It takes values in $\{ 0 , 1 \}$, so the operators
\[
    P^{( 0 )} = \int_\T r ( \theta ) \, d \mathsf{E} ( \theta ) , \quad P^{( 1 )} = \int_\T \big( r ( \theta ) + \one_{\{ 0 \}} ( \theta ) \big) \, d \mathsf{E} ( \theta )
\]
are orthogonal projections.
Let $r_j = r + j \one_{\{ 0 \}}$ for $j = 0 , 1$. 
For every $f \in \mathcal{K}$, with $\nu_f ( B ) = \langle \mathsf{E} ( B ) f , f \rangle$, the spectral theorem gives
\[
      \| ( P_{a + u_k^{( j )}} - P^{( j )} ) f \|^2
      = \int_\T
      | s ( a + u_k^{( j )} - \theta ) - r_j ( \theta ) |^2
      \, d \nu_f ( \theta ) 
      \longrightarrow 0 ,
\]
where we used the dominated convergence theorem, since the integrand converges to $0$ at every $\theta$ and is bounded by $1$. 
In particular, we have 
\begin{equation}\label{eq:nested}
    P^{( 1 )} = P^{( 0 )} + \mathsf{E} ( \{ 0 \} ) , \quad P^{( 0 )} \mathsf{E} ( \{ 0 \} ) = 0 .
\end{equation}
For a nonzero $h \in \mathsf{E} ( \{ 0 \} ) \mathcal{K}$, we have $P^{( 0 )} h = 0$ and $P^{( 1 )} h = h$.
Together with \eqref{eq:nested}, this gives $P^{( 0 )} \mathcal{K} \subsetneq P^{( 1 )} \mathcal{K}$.
For every $f \in \mathcal{K}$, the estimate on $G$ gives
\[
      \| ( P^{( j )} - Q ) f \|
      = \lim_{k \to \infty} \| ( P_{a + u_k^{( j )}} - Q ) f \|
      \leq \rho \| f \| 
      \quad \text{for} \;\;  j = 0 , 1  .
\]
Therefore, $\| P^{( j )} - Q \| \leq \rho < 1$ for $j = 0 , 1$, which contradicts with Lemma~\ref{lem:projections}.
\end{proof}

\begin{remark}\label{rem:essential}
Condition (ii) in Proposition~\ref{prop:oscillation} cannot be replaced by the mere failure of the right hand limit at $a$.
For example, the characteristic function of $[ 1 / 2 , 1 ) \cup \{ 2^{- n} : n \geq 2 \}$ has no right hand limit at $0$, but this set agrees almost everywhere with $[ 1 / 2 , 1 )$, whose $L^2$ space has the orthogonal basis $E ( 2 \Z )$.
\end{remark}

\begin{remark}\label{rem:essential2}
Allowing two exceptional points in condition (i) does not, in general, preserve the conclusion.
For example, let $ \Omega = S_* \cup ( ( [ 0 , 1 ) \backslash S_* ) + 1 )$.
Reduction modulo $1$ gives a measure-preserving bijection from $\Omega$ onto $[ 0 , 1 )$.
Since integer exponentials are $1$-periodic, this bijection identifies $E ( \Z )$ on $\Omega$ with the usual Fourier basis on $[ 0 , 1 )$.
Hence $E ( \Z )$ is an orthonormal basis for $L^2 ( \Omega )$.
The characteristic function of $\Omega$ has right hand limits everywhere except at $0$ and $1$, and both values occur on sets of positive measure in every right neighborhood of either point.

For the corresponding circle example, let $C = \Omega / 2$ and $s = \one_C$.
Then $E ( 2 \Z )$ is an orthogonal basis for $L^2 ( C )$, and $s ( t - \tfrac{1}{2} ) = 1 - s ( t )$ on $\T$.
The two exceptional points are $0$ and $\tfrac{1}{2}$.
In the stationary construction for $\Gamma = 2 \Z$, the shift orbit has two points, so $V$ has the two nonzero rank-$1$ spectral projections $\mathsf{E} ( \{ 0 \} )$ and $\mathsf{E} ( \{ \tfrac{1}{2} \} )$.
Therefore, we have
\[
    P_t = s ( t ) \, \mathsf{E} ( \{ 0 \} ) + ( 1 - s ( t ) ) \, \mathsf{E} ( \{ \tfrac{1}{2} \} ) .
\]
As $t \to 0^+$ through points where $s ( t ) = 0$ and $s ( t ) = 1$, respectively, the limiting projections are $\mathsf{E} ( \{ \tfrac{1}{2} \} )$ and $\mathsf{E} ( \{ 0 \} )$.
The limiting symbols differ at the nonzero spectral atom $\theta = \tfrac{1}{2}$ as well as at $0$, so the agreement away from $0$ used in Proposition~\ref{prop:oscillation} fails.
The two projections have orthogonal nonzero ranges and hence are not nested.
This is the exchange of spectral components described in \cite[Remark~6.3]{OC}.
\end{remark}


We now pass from integer frequencies to real frequencies by using Lemma~\ref{lem:rounding}.  

\begin{corollary}\label{cor:real}
Let $S \subset \R$ be a bounded measurable set of positive measure, and let $s : \R \to \{ 0 , 1 \}$ be a compactly supported Borel representative of $\one_S$.
Assume that for some $a \in \R$, the limit $\lim_{u \to 0^+} s ( x + u )$ exists for every $x \neq a$, and
\begin{equation}\label{eq:real-oscillation}
    0 < \int_0^\varepsilon s ( a + u ) \, du < \varepsilon
\end{equation}
for every sufficiently small $\varepsilon > 0$.
Then $S$ admits no Riesz spectrum.
The same conclusion holds with left hand limits and left neighborhoods in place of right hand limits and right neighborhoods.
\end{corollary}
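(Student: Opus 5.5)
We argue by contradiction: we rescale and round the frequencies so that the problem becomes one about integer frequencies on the circle, and then apply Proposition~\ref{prop:oscillation}. Suppose $\Lambda \subset \R$ is a Riesz spectrum for $S$ with bounds $A,B$.

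\textbf{Normalization.} Since $s$ has compact support, we first translate and dilate so that the support of $s$ lies inside $[0,1/2)$, with $a$ in its interior or at its left end. An affine change of variables $x \mapsto \alpha x + \beta$ with $\alpha>0$ has three effects. It maps $\Lambda$ to $\alpha^{-1}\Lambda$, up to unimodular factors $e^{2\pi i \lambda \beta}$ that do not affect the Riesz basis property. It multiplies both Riesz bounds by the same factor. It preserves the hypotheses on $s$, namely existence of right hand limits away from $a$ and condition \eqref{eq:real-oscillation}. So we may assume $S \subset [\tfrac14,\tfrac34] \subset [0,1]$, leaving room so that reduction modulo a period introduces no new boundary points.

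\textbf{Rounding.} Next we apply Lemma~\ref{lem:rounding} with $M$ large: $M>1/\delta$ and \eqref{eq:rounding-condition} holds. This gives a Riesz spectrum $\Lambda' \subset \frac1M\Z$ for $S$. Dilating by $1/M$, the set $M\Lambda' \subset \Z$ is a Riesz spectrum for $S/M \subset [0,1/M) \subset [0,1)$. We view $S/M$ as a subset of $\T$ with Borel representative $\tilde s(x) = s(Mx)$ on $[0,1)$.

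\textbf{Checking the hypotheses of Proposition~\ref{prop:oscillation}.} Take $\tilde a = a/M$. For (i), let $x \neq \tilde a$ in $\T$. If $x$ lies in $[0,1)$ away from the wraparound point, the right hand limit of $\tilde s$ at $x$ exists by hypothesis after scaling. At the point $x$ that equals $1 \equiv 0$ approached from the right, we are near $0$, where $\tilde s$ vanishes identically because $S \subset [\tfrac14,\tfrac34]$; so this limit also exists. For (ii), the hypothesis \eqref{eq:real-oscillation} says exactly that both level sets $\{u\in(0,\varepsilon): s(a+u)=j\}$, $j=0,1$, have positive measure for small $\varepsilon$, and this is preserved by scaling. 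Proposition~\ref{prop:oscillation} therefore forbids an integer Riesz spectrum, which is a contradiction.

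\textbf{Left-hand version and main obstacle.} For the version with left limits, we apply the reflection $x \mapsto -x$. It turns left hand limits into right hand limits and maps $\Lambda$ to $-\Lambda$. One caveat is that reflection may change the Borel representative at isolated points. Since only the existence of one-sided limits of the chosen representative matters, we simply compose $s$ with the reflection, which keeps it Borel. The main point requiring care is the bookkeeping when passing from $\R$ to $\T$. We must ensure the embedding into a period creates no new exceptional point: the only candidate is the wraparound point, and it is harmless because $\tilde s$ vanishes near it. We must also confirm that every normalization step keeps the Riesz property with positive lower bound, so that Lemma~\ref{lem:rounding} applies.
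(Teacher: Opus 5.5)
Your proposal is correct and follows the paper's proof essentially step by step: normalize by an affine map, round the frequencies to $\frac{1}{M}\Z$ via Lemma~\ref{lem:rounding}, rescale by $1/M$ to obtain an integer spectrum on $\T$, check that the periodized representative acquires no new exceptional point at the wraparound, apply Proposition~\ref{prop:oscillation}, and reflect for the left-hand version. The one thing to tidy is the normalization. The vanishing of $\tilde s$ near the wraparound point requires $\operatorname{supp}(s)$, not just $S$, to lie strictly inside $(0,1)$; the paper states this as $S \cup \operatorname{supp}(s) \cup \{a\} \subset (0,1)$. You should therefore reconcile your two normalizations, $[0,1/2)$ and $[\tfrac14,\tfrac34]$, into a single choice that places $\operatorname{supp}(s)$ there.
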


\begin{proof}
Since translations and positive dilations preserve the stated hypotheses and the existence of a Riesz spectrum, 
we may assume that $S \cup \operatorname{supp} ( s ) \cup \{ a \} \subset ( 0 , 1 )$.
Suppose to the contrary that $\Lambda$ is a Riesz spectrum for $S$.
By Lemma~\ref{lem:rounding}, there exist $M \in \N$ and a Riesz spectrum $\Lambda' \subset \frac{1}{M} \Z$ for $S$.
Let $A' , B'$ be Riesz bounds for $\Lambda'$.
Applying the change of variables $x = M y$ gives an integer Riesz spectrum $M \Lambda'$ for $\frac{1}{M} S$, with bounds $\frac{A'}{M}$ and $\frac{B'}{M}$.
Indeed, for any finitely supported sequence $\{ c_{\lambda'} \}_{\lambda' \in \Lambda'}$, we have
\[
\int_{S / M}
\Big| \sum_{\lambda' \in \Lambda'} c_{\lambda'}  e^{2 \pi i M \lambda' y} \Big|^2 \, dy 
= \frac{1}{M} \int_S
\Big| \sum_{\lambda' \in \Lambda'} c_{\lambda'}  e^{2 \pi i \lambda' x} \Big|^2 \, dx .
\]
Since the map $f \mapsto \sqrt{M} f ( M \cdot )$ is unitary from $L^2 ( S )$ onto $L^2 ( \frac{1}{M} S )$, the completeness is also preserved.

We define $s_M ( y ) = s ( M y )$ for $y \in [ 0 , 1 )$, and extend $s_M$ periodically over $\R$.
Since $s$ is supported in $( 0 , 1 )$, this extension is zero near the point where the endpoints of $[ 0 , 1 )$ are identified.
It is a Borel representative of $\one_{\frac{1}{M} S}$ on $\T$, and has a right hand limit at every point except $\frac{a}{M}$.
For sufficiently small $\varepsilon > 0$, we have 
\[
    0 < \int_0^\varepsilon s_M ( \tfrac{a}{M} + u ) \, du = \frac{1}{M} \int_0^{M \varepsilon} s ( a + v ) \, dv < \varepsilon .
\]
Thus Proposition~\ref{prop:oscillation} applies at $a / M$, giving a contradiction.
Reflection gives the assertion for left hand limits.
\end{proof}

\begin{proof}[Proof of Theorem~\ref{thm:main}]
Note that $S$ is a bounded Borel set, with $| S | \geq t_0 - t_1 > 0$, and its characteristic function $\mathds{1}_S$ is compactly supported. 
Away from $0$, the endpoints are locally finite, so this function has a right hand limit at every nonzero point.
Given $\varepsilon > 0$, we can choose $n \in \N$ such that $t_{2 n} < \varepsilon$.
Then $( t_{2 n + 1} , t_{2 n} ) \subset S \cap ( 0 , \varepsilon )$ and $( t_{2 n + 2} , t_{2 n + 1} ) \subset ( 0 , \varepsilon ) \backslash S$.
Since both intervals have positive length, we have 
  \[
      0 < \int_0^\varepsilon \one_S ( u ) \, du < \varepsilon .
  \]
Applying Corollary~\ref{cor:real}, we conclude that $S$ admits no Riesz spectrum. 
\end{proof}

\begin{remark}
Corollary~\ref{cor:real} can be applied to any bounded union of intervals whose boundary is locally finite away from a single point, 
provided that intervals and gaps, both of positive length, occur arbitrarily close to that point from the same side.
No comparison between their lengths is needed. 
\end{remark}

\begin{remark}\label{rem:KNO}
The example constructed in \cite[Theorem~2]{KNO} satisfies the left-hand version of Corollary~\ref{cor:real}. 
At each stage, only the rightmost retained interval is subdivided. 
Since these rightmost intervals are nested and their lengths tend to zero, their endpoints converge to a point $p$. 
Permanently retained intervals and gaps of positive length accumulate at $p$ from the left, and the interval endpoints are locally finite away from $p$. 
Therefore, the characteristic function has a left hand limit at every point except $p$, and both values occur on sets of positive measure in every left neighborhood of $p$.
Then Corollary~\ref{cor:real} provides an alternative proof of the nonexistence conclusion for their example. 
Note that only these geometric properties enter the proof, and the quantitative estimates of \cite{KNO} for finite-stage sets and the disk are not needed here. 
\end{remark}

\section{Conclusion}
\label{sec:conclusion}

Theorem~\ref{thm:main} provides a wide class of countable unions of intervals without Riesz spectra, including the dyadic set $\bigcup_{n \geq 0} [ 4^{- n} / 2 , 4^{- n} )$. 
The interval lengths and gap lengths can be chosen independently, subject only to the positivity and summability conditions.
The proof relies on the one-sided criterion in Proposition~\ref{prop:oscillation}.
In the stationary representation, the limiting symbols differ only at zero, where the spectral projection is nonzero; 
consequently, the corresponding projections have strictly nested ranges. 
This contradicts the projection estimate required by an integer Riesz spectrum. 
Finally, stability under frequency perturbations yields the result for arbitrary real frequencies.

The restriction to one exceptional point cannot simply be omitted.
In Remark~\ref{rem:essential2}, we provide a set with two exceptional points that nevertheless admits an orthogonal exponential basis.
In that example, the limiting symbols also differ at a nonzero spectral point, and the resulting projection ranges are not nested.
Therefore, one-sided oscillation at finitely many points is not, by itself, sufficient for nonexistence of Riesz spectra.

\section*{Acknowledgments}

D.~G.~Lee is supported by the National Research Foundation of Korea (NRF) grant funded by the Korean government (MSIT) (RS-2026-25598099).
The author used ChatGPT to assist in developing and checking mathematical arguments, as well as in drafting the manuscript. The author evaluated and verified all resulting material and takes full responsibility for the paper's mathematical content.

\end{document}